\newcolumntype{P}[1]{>{\RaggedRight\hspace{0pt}}p{#1}}
\newcolumntype{L}{>{\begin{math}}l<{\end{math}}}%
\newcolumntype{C}{>{\begin{math}}c<{\end{math}}}%
\newcolumntype{R}{>{\begin{math}}r<{\end{math}}}%
\newtheorem{theorem}{Theorem}[section]
\newtheorem{definition}{Definition}[section]
\newtheorem{example}{Example}[section]
\theoremstyle{definition}
\DeclareMathOperator{\gde}{gde}
\title[Rotating Binaries]{\Large Rotating Binaries}
\begin{document}

\clearpage\maketitle
\thispagestyle{empty}

\begin{center}
Anant Gupta$^{1}$, Idriss J. Aberkane$^{2}$, Sourangshu Ghosh$^{3}$, Adrian Abold$^{4}$,\\Alexander Rahn$^{5}$ and Eldar Sultanow$^{6,7}$\\
\vspace{10px}
\hspace*{5mm}
\fontsize{8pt}{10pt}\selectfont
$^{1}$~Georgia Institute of Technology, North Ave NW, Atlanta, GA 30332, USA, \href{mailto:agupta886@gatech.edu}{agupta886@gatech.edu}\\
\vspace{5px}
$^{2}$~Unesco-Unitwin Complex Systems Digital Campus, Chair of Prof. Pierre Collet, ICUBE - UMR CNRS 7357, 4 rue Kirschleger, 67000 Strasbourg, France, \href{mailto:idriss.aberkane@polytechnique.edu}{idriss.aberkane@polytechnique.edu}\\
\vspace{5px}
$^{3}$~Department of Civil Engineering, Indian institute of Technology Kharagpur, Kharagpur, West Bengal 721302, India, \href{mailto:sourangshu@iitkgp.ac.in}{sourangshu@iitkgp.ac.in}\\
\vspace{5px}
$^{4}$~Friedrich-Alexander-Universität Erlangen-Nürnberg, Lange Gasse 20, 90403 Nuremberg, Germany, \href{mailto:adrian.abold@fau.de}{adrian.abold@fau.de}\\
\vspace{5px}
$^{5}$~Nuremberg Institute of Technology, Keßlerpl. 12, 90489 Nuremberg, Germany, \href{mailto:rahnal71212@th-nuernberg.de}{rahnal71212@th-nuernberg.de}\\
\vspace{5px}
$^{6}$~Potsdam University, Chair of Business Informatics, Processes and Systems, Karl-Marx Straße 67, 14482, Potsdam, Germany, \href{mailto:eldar.sultanow@wi.uni-potsdam.de}{eldar.sultanow@wi.uni-potsdam.de}\\
\vspace{5px}
$^{7}$~Capgemini, Bahnhofstraße 30, 90402, Nuremberg, Germany, \href{mailto:eldar.sultanow@capgemini.com}{eldar.sultanow@capgemini.com}\\
    
\end{center}
\begin{abstract}
This paper investigates the behaviour of rotating binaries. A rotation by $r$ digits to the left of a binary number $B$ exhibits in particular cases the divisibility \mbox{$l\mid N_1(B)\cdot r+1$}, where $l$ is the bit-length of $B$ and $N_1(B)$ is the Hamming weight of $B$, that is the number of ones in $B$. The integer $r$ is called the \textit{left-rotational distance}. We investigate the connection between this rotational distance, the length and the Hamming weight of binary numbers. Moreover we follow the question under which circumstances the above mentioned divisibility is true. We have found out and will demonstrate that this divisibility occurs for $kn+c$ cycles.
\end{abstract}

\section{Introduction}
\label{sec:introduction}
A divisibility feature of rotated binary numbers has been discovered by Darrell Cox \cite{Ref_Cox_2021} and taken further, analyzed and visualized for numerous cases using the Python programming language by Eldar Sultanow \cite{Ref_Sultanow_2021}. In the following we will develop a computational base for the binary rotation, its related cycles and generalize the divisibility feature.

Let us take a binary number $B$ of length $l$ with $N_1(B)$ ones (and $N_0(B)=l-N_1(B)$ zeros), for example $l=8$, $N_1(B)=5$ and $B=10110101=181$, the minimum that is obtainable by rotating $B$ is $B_{\min}=01011011=91$ and the maximum is $B_{\max}=11011010=218$. The left-rotational distance is $r=3$, since we obtain the maximum $11011010$ by three left rotates of the minimum $01011011$. The maximum $218=(91\cdot2^3)\bmod(255)$ can be obtained directly using equation~\ref{eq:b_max} follows:

\begin{equation}
\label{eq:b_max}
\begin{array}{l}
B_{\max}=(B_{\min}\cdot2^r)\bmod{(2^l-1)} \\
B_{\min}=(B_{\max}\cdot2^{l-r})\bmod{(2^l-1)} 
\end{array}
\end{equation}

Vice versa, we calculate the minimum directly as $91=(218\cdot2^{8-3})\bmod(255)$. Moreover, we can calculate the length $l$ (See Sedgewick and Wayne \cite[p.~185]{Ref_Sedgewick_2011}) and the Hamming weight $N_1(B)$ using $B_{\max}$ (see Weisstein \cite{Ref_Weisstein_DigitCount} and Allouche and Shallit \cite[p.~74]{Ref_Allouche_2003}) directly:

\[
\begin{array}{l}
l=\lfloor\log_2(B_{\max})\rfloor+1\\
N_1(B)=B_{\max}-\gde(B_{\max}!,2)=B_{\max}-\sum_{i=1}^{l-1}\left\lfloor\nicefrac{B_{\max}}{2^i})\right\rfloor
\end{array}
\]

It is briefly mentioned that $\gde(n,2)$ denotes the greatest dividing exponent of base $2$ with respect to a number $n$, which is the largest integer value of $k$ such that $2^k\mid n$ with $2^k\le n$, see \cite{Ref_Weisstein_GDE}.

By applying these formulas to our example, we obtain $l=\lfloor\log_2(218)\rfloor+1=7+1=8$ and $N_1(B)=218-\gde(218!,2)=218-(109+54+27+13+6+3+1)=218-213=5$. The divisibility $l\mid N_1(B)\cdot r+1$ can be written in our example as $8\mid \left(218-\sum_{i=1}^{7}\left\lfloor\nicefrac{218}{2^i}\right\rfloor\right) \cdot 3+1$. In our example the divisibility $l\mid N_1(B)\cdot r+1$ holds, since $8\mid 5\cdot 3+1$ is true. Our question is: Under which circumstances is this divisibility generally granted?

\par\medskip
To prove that this divisibility holds, we need to show that there always exist integers $a$ and $b$ that solves the diophantine equation, which we deduce from equation~\ref{eq:b_max}:
\[
\left(\frac{(2^l-1)b+B_{\max}}{B_{\min}}\right)^{N_1(B)}=2^{a\cdot l-1}=2^{r\cdot N_1(B)}
\]

\par\medskip
In our example $a=b=2$ provide a solution: to solve $218=(91\cdot2^r)\bmod{(2^8-1)}$ we substitute $2^r=Y$ and solve the linear congruence $(91\cdot Y)\equiv218\bmod{(2^8-1)}$, which is solvable if $\gcd(91,2^8-1)\mid218$ and there is a unique solution if $91$ and the modulus $2^8-1$ are coprime $\gcd(91,2^8-1)=1$. This coprimality is given here. The solution is $Y\equiv8\bmod(255)$ and resubstitution of $Y$ leads to $2^r\equiv8\bmod(255)$, which brings us to the solutions $r=3,11,19,27,\ldots$ and so on. All these $r$ values enables us to find solutions $a=2,7,12$ for $5\cdot r=8\cdot a-1$. In order that the divisibility $l\mid N_1(B)\cdot r+1$ is given, we must show that $\gcd(B_{\min},2^L-1)\mid B_{\max}$ and $2^l-1\mid2^rB_{\min}-B_{\max}$.

\section{\texorpdfstring{Binary rotations lead us to $3n+c$ cycles}{Binary rotations lead us to 3n+c cycles}}
\label{sec:binary_rotations_to_cycles}
Take a binary number $B$ with a Hamming weight $N_1(B)$ as input for a function $z$, which Darrel Cox \cite{Ref_Cox_2021} defined as follows, where $0\le x_1<x_2<\ldots<x_{N_1(B)}\le N_1(B)-1$ are the positions (indexing is zero-based) in $B$ occupied by $1$:
\begin{equation}
\label{eq:z}
z(B)=\sum_{i=1}^{N_1(B)}3^{N_1(B)-i}2^{x_i}
\end{equation}

This function $z$ is adapted from Halbeisen's and Hungerbühler's function $\varphi$, see \cite{Ref_Halbeisen_Hungerbuehler_1997}. In the introductory example $B_{\max}=11011010$ we have $z(B_{\max})=z(11011010)=319$ and the five positions in our binary number $B_{\max}$ that are occupied by 1 are $(x_1,x_2,x_3,x_4,x_5)=(0,1,3,4,6)$:

\begin{flalign*}
319&=3^{N_1(B)-1}2^{x_1}+3^{N_1(B)-2}2^{x_2}+3^{N_1(B)-3}2^{x_3}+3^{N_1(B)-4}2^{x_4}+3^{N_1(B)-5}2^{x_5}\\
&=3^42^0+3^32^1+3^22^3+3^12^4+3^02^6    
\end{flalign*}

\par\medskip
Similarly we can calculate $z(B_{\min})=z(01011011)=842$. Both integers, the $319$ and the $864$ belong to a $3n+13$ cycle that is given by the following function whose parameter in this case is $c=2^l-3^{N_1(B)}=13$:

\begin{equation}
\label{eq:func_collatz}
f_c(x)=
\begin{cases}
\nicefrac{3x+c}{2}	&	2\nmid x\\
\nicefrac{x}{2}		&	\text{otherwise}
\end{cases}
\end{equation}

\par\medskip
Note that $319$ is the smallest member and $864$ is the largest member of this sequence and the binary representation of $B_{\max}=11011010$ reflects the course of this cycle starting with its smallest member $319$, where the ones represent odd members and the zeros represent even members:

\[
(v_1,v_2,v_3,v_4,v_5,v_6,v_7,v_8)=(319,485,734,367,557,842,421,638)
\]
\par\medskip
Table~\ref{table:rot_distances_5_2} shows the left-rotational distances of a binary number that we obtain from the integer $x$ in the first column using the reverse function $z^{-1}(x)$ to another number located in the same row of $v$. For example the left-rotational distance of $557=z(10101101)$ to $734=z(01101011)$ is six, which we highlighted blue. Table~\ref{table:rot_distances_5_2} highlights our case of the rotational distance from $842=z(01011011)=z(B_{\min})$ to $319=z(11011010)=z(B_{\max})$ using the color green. The integer $r=3$ is the only rotational distance value that provides a solution for the divisibility $8\mid 5\cdot r+1$.

\begin{table}[H]
	\centering
	\begin{tabular}{L|LLLLLLLL}
		\thead{} &
		\thead{\boldsymbol{319}} &
		\thead{\boldsymbol{485}} &
		\thead{\boldsymbol{734}} &
		\thead{\boldsymbol{367}} &
		\thead{\boldsymbol{557}} &
		\thead{\boldsymbol{842}} &
		\thead{\boldsymbol{421}} &
		\thead{\boldsymbol{638}}\\
		\hline
		\thead{\boldsymbol{319}} &
		0 & 1 & 2 & 3 & 4 & 5 & 6 & 7
		\\
		\thead{\boldsymbol{485}} &	
		7 & 0 & 1 & 2 & 3 & 4 & 5 & 6
		\\
		\thead{\boldsymbol{734}} &
		6 & 7 & 0 & 1 & 2 & 3 & 4 & 5
		\\
		\thead{\boldsymbol{367}} &
		5 & 6 & 7 & 0 & 1 & 2 & 3 & 4
		\\
		\thead{\boldsymbol{557}} &
		4 & 5 & \colorbox{blue!15}{\textbf{6}} & 7 & 0 & 1 & 2 & 3
		\\
		\thead{\boldsymbol{842}} &
		\colorbox{green!30}{\textbf{3}} & 4 & 5 & 6 & 7 & 0 & 1 & 2
		\\
		\thead{\boldsymbol{421}} &
		2 & 3 & 4 & 5 & 6 & 7 & 0 & 1
		\\
		\thead{\boldsymbol{638}} &
		1 & 2 & 3 & 4 & 5 & 6 & 7 & 0
		\\
	\end{tabular}
	\caption{Rotational distances of the $3x+13$ cycle members}
	\label{table:rot_distances_5_2}
\end{table}

\section{What we know about cycles}
Starting point of our considerations is the function $f_c(x)$ given by equation~\ref{eq:func_collatz}.

Let $S$ be a set containing two elements $n_1$ and $n_0$, which are bijective functions over $\mathbb{Q}$:
\begin{equation}
n_1(x)=\nicefrac{3x+c}{2}\hspace{4em} n_0(x)=\nicefrac{x}{2}
\end{equation}

Let a binary operation be the left-to-right composition of functions $n_1\circ n_0$, where $n_1\circ n_0(x)=n_0(n_1(x))$. $S^\ast$ is the composition monoid (transformation monoid), which is freely generated by $S$. The identity element is the identity function $id_{\mathbb{Q}}=e$. We call $e$ an \textit{empty string}. $S^\ast$ consists of all expressions (strings) that can be concatenated from the generators $n_1$ and $n_0$. Every string can be written in precisely one way as product of factors $n_1$ and $n_0$ and natural exponents $k_i>0$:

\[
e,n_1^{k_1},n_0^{k_1},n_1^{k_1}n_0^{k_2},n_0^{k_1}n_1^{k_2},n_1^{k_1}n_0^{k_2}n_1^{k_3},n_0^{k_1}n_1^{k_2}n_0^{k_3},\ldots
\]

These uniquely written products are called \textit{reduced words} over $S$. Using exponents $k_i,h_i>0$, we construct strings $s_i=n_1^{k_i}n_0^{h_i}$ and concatenate these to a larger string:

\[
s_1s_2\cdots s_l=n_1^{k_1}n_0^{h_1}n_1^{k_2}n_0^{h_2}\cdots n_1^{k_l}n_0^{h_l}
\]

Note that each string $s_i$ is a reduced word, since $k_i,h_i>0$. Let us evaluate this (large) string by inputting a natural number $v_1$. If the result is again $v_1$ then we obtain a cycle:

\[
n_1^{k_1}n_0^{h_1}n_1^{k_2}n_0^{h_2}\cdots n_1^{k_l}n_0^{h_l}(v_1)=n_0^{h_l}(n_1^{k_l}(\cdots n_0^{h_2}(n_1^{k_2}(n_0^{h_1}(n_1^{k_1}(v_1))))))=v_1
\]

We write the sums briefly as $N_1=k_1+\cdots+k_l$ and $N_0=h_1+\cdots+h_l$. The cycle contains $N_1+N_0$ elements. We summarize this fact to the following definition~\ref{def:odd_even_elements}:

\begin{definition}
\label{def:odd_even_elements}
A cycle consists of $N_1+N_0$ elements, where $N_1=k_1+\cdots+k_l$ is the number of its odd members and $N_0=h_1+\cdots+h_l$ the number of its even members.
\end{definition}

\par\noindent
Moreover we define $A=a_1+\cdots+a_l$ with
\[
a_i=2^{\sum_{j=1}^{i-1}k_j+h_j}\cdot\left(3^{k_i}-2^{k_i}\right)\cdot 3^{\sum_{j=i+1}^{l}k_j}
\]

\par\medskip\noindent
Theorem~\ref{theo:v1} calculates the smallest member of the $3n+c$ cycle, which in line with definition~\ref{def:odd_even_elements} consists of $N_1$ odd and $N_0$ even members \cite{Ref_Gupta_2020}:

\begin{theorem}
\label{theo:v1}
The smallest number $v_1$ belonging to a cycle having $N_1$ odd and $N_0$ even members is:
\[
v_1=\frac{c\cdot A}{2^{N_1+N_0}-3^{N_1}}
\]
\end{theorem}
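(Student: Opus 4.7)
The plan is to compute the value of $s_l \circ \cdots \circ s_1$ applied to $v_1$ by building up the closed form block by block, and then to impose the cycle condition to solve for $v_1$.

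First I would establish the closed forms for the basic pieces. By induction on $k$, iterating $n_1(x)=(3x+c)/2$ yields
\[
n_1^{k}(x)\;=\;\frac{3^{k}x+(3^{k}-2^{k})c}{2^{k}},
\]
and post-composing with $n_0^{h}(y)=y/2^{h}$ gives the effect of one block $s_i=n_1^{k_i}n_0^{h_i}$ as
\[
s_i(x)\;=\;\frac{3^{k_i}x+(3^{k_i}-2^{k_i})c}{2^{k_i+h_i}}.
\]
Next I would compose the blocks. Writing $m_j=k_j+h_j$, I claim by induction on $i$ that
\[
(s_i\circ\cdots\circ s_1)(v_1)\;=\;\frac{3^{k_1+\cdots+k_i}\,v_1\;+\;c\sum_{j=1}^{i}2^{m_1+\cdots+m_{j-1}}(3^{k_j}-2^{k_j})\,3^{k_{j+1}+\cdots+k_i}}{2^{m_1+\cdots+m_i}}.
\]
The inductive step multiplies numerator by $3^{k_{i+1}}$, adds $(3^{k_{i+1}}-2^{k_{i+1}})c\cdot 2^{m_1+\cdots+m_i}$ after clearing denominators, and divides by $2^{m_{i+1}}$, which is exactly the bookkeeping that produces the advertised $2$- and $3$-exponents $\sum_{j<i} m_j$ and $\sum_{j>i} k_j$ occurring in the definition of $a_i$.

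At $i=l$ the sum in the numerator is precisely $A$, and the denominator is $2^{N_1+N_0}$. Imposing the cycle condition $(s_l\circ\cdots\circ s_1)(v_1)=v_1$ gives
\[
v_1\bigl(2^{N_1+N_0}-3^{N_1}\bigr)\;=\;c\cdot A,
\]
from which the stated formula follows once we know $2^{N_1+N_0}\ne 3^{N_1}$ (which is automatic since $2,3$ are coprime whenever $N_1+N_0>0$).

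The last thing to justify is the normal form assumed for the cycle, namely that reading the cycle starting from its smallest element $v_1$ produces a word that begins with $n_1$ and ends with $n_0$, so that the decomposition $s_1\cdots s_l$ with all $k_i,h_i>0$ is actually available. For this I would argue that the smallest element of a cycle of positive values must be odd (otherwise $n_0$ would produce a strictly smaller element still in the cycle), and the element mapping to it must be even after an $n_0$-step rather than an $n_1$-step (since for $c>0$ and positive $x$ the map $n_1$ strictly increases $x$, so $v_1$ cannot be the image of $n_1$). This forces the cyclic word, read from $v_1$, to start with $n_1$ and end with $n_0$. I expect this positivity/minimality step to be the only subtle point; the rest is a careful but routine bookkeeping induction, which is the main obstacle only insofar as one must keep the $2$-exponents (cumulative $m_j$ before block $i$) and $3$-exponents (cumulative $k_j$ after block $i$) straight so that the partial sums align with $a_i$ exactly.
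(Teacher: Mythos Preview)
Your argument is correct: the iterated block formula, the induction on the number of blocks, and the final solve from the fixed-point equation are all sound, and the bookkeeping matches the paper's definition of $a_i$ exactly. Your final paragraph, arguing that the cyclic word read from the minimum must start with $n_1$ and end with $n_0$, is the right way to ensure the block decomposition $s_1\cdots s_l$ with all $k_i,h_i>0$ is available; note that this implicitly uses $c>0$ and positivity of cycle members, which is consistent with the standing assumptions of the paper (cf.\ Theorem~\ref{theo:cycle_restriction_1}(a)).

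As for comparison: the paper does not actually supply a proof of Theorem~\ref{theo:v1} in the text; it states the result and attributes it to \cite{Ref_Gupta_2020}. Your direct computation is therefore more than the paper itself provides, and it is the natural approach one would expect such a reference to contain.
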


\par\medskip
\begin{example}
\label{ex:C_3_11}
We consider a $3n+11$ cycle that has $N_1+N_0=8+6=14$ elements and choose $(k_1,k_2,k_3,k_4)=(3,1,3,1)$ and $(h_1,h_2,h_3,h_4)=(1,1,2,2)$. Its smallest element is $v_1=13$ and we obtain all elements by evaluating the strings: $v_2=n_1(v_1)$, $v_3=n_1(v_2)$, $v_4=n_1(v_3)$ and $v_5=n_0(v_4)$ and so forth. It applies:
\begin{flalign*}
&n_1n_1n_1n_0\circ n_1n_0\circ n_1n_1n_1n_0n_0\circ n_1n_0n_0(v_1)\\
=&n_1^3n_0\circ n_1n_0\circ n_1^3n_0^2\circ n_1n_0^2(v_1)\\
=&s_1\circ s_2\circ s_3\circ s_4(v_1)=v_1
\end{flalign*}

\par\noindent
This cycle is $(v_1,v_2,v_3,\ldots,v_{14})=(13,25,43,70,35,58,29,49,79,124,62,31,52,26)$. We calculate $v_1$ directly as follows:

\[
v_1=\frac{11\cdot 11609}{2^{8+6}-3^8}=\frac{11\cdot11609}{9823}=13
\]

\par\noindent
In this case $11609=A=a_1+a_2+a_3+a_4=4617+1296+3648+2048$:

\[
\begin{array}{llll}
a_1=2^{0}&(3^3-2^3)&3^{1+3+1}&=4617\\
a_2=2^{3+1}&(3^1-2^1)&3^{3+1}&=1296\\
a_3=2^{3+1+1+1}&(3^3-2^3)&3^{1}&=3648\\
a_4=2^{3+1+1+1+3+2}&(3^1-2^1)&3^{0}&=2048
\end{array}
\]
\end{example}

\begin{theorem}
\label{theo:max}
The maximum odd element in a $3n+c$ cycle occurs immediately before the maximum even element.
\end{theorem}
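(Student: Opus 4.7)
The plan is to exploit the monotonicity of $n_1$ and $n_0$ on positive inputs, which under the standing assumption $c>0$ and positive cycle members reads $n_1(x)=(3x+c)/2>x$ and $n_0(x)=x/2<x$. Thus $n_1$ will strictly increase and $n_0$ strictly decrease along the cycle, and two short one-step arguments should pin down the locations of the maximum odd and the maximum even member.

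First I would look at the successor of the maximum odd element $o_{\max}$. Since $o_{\max}$ is odd, the successor is $n_1(o_{\max})$, which is strictly larger than $o_{\max}$; if it were odd it would contradict maximality of $o_{\max}$, so it must be even. Denote it by $e^{\ast}:=n_1(o_{\max})$. Next, I would dually examine the predecessor $p$ of the maximum even element $e_{\max}$. If $p$ were even, then $e_{\max}=n_0(p)=p/2$ would force $p=2e_{\max}>e_{\max}$, contradicting maximality of $e_{\max}$. So $p$ must be odd; write it as $o^{\ast}$, so that $e_{\max}=n_1(o^{\ast})$.

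To close, I would combine the two steps using strict monotonicity of $n_1$: because $o^{\ast}\le o_{\max}$, we get $e_{\max}=n_1(o^{\ast})\le n_1(o_{\max})=e^{\ast}$, and because $e^{\ast}$ is an even member of the cycle, $e^{\ast}\le e_{\max}$. Hence $e^{\ast}=e_{\max}$, which is exactly the assertion that $o_{\max}$ is immediately followed by $e_{\max}$. I do not expect a serious obstacle here; the only points to verify are the positivity hypotheses used for strict monotonicity (implicit throughout the paper), and that every such cycle actually contains at least one odd and at least one even member, which is automatic from the word decomposition $n_1^{k_1}n_0^{h_1}\cdots n_1^{k_l}n_0^{h_l}$ with each $k_i,h_i>0$.
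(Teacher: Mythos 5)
Your proof is correct and takes essentially the same route as the paper: show the predecessor of the maximum even element must be odd, then use the strict monotonicity of $n_1$ to force that predecessor to be the maximum odd element. Your write-up is in fact slightly more complete, since you explicitly verify (via $n_1(x)>x$) that the successor of the maximum odd element is even, a step the paper leaves implicit.
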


\begin{proof}
The maximum even element of the cycle cannot succeed an even element as the preceding element would be twice the element taken. The maximum odd element occurs before the maximum even element is equivalent to saying that the maximum even element follows the maximum odd element. Let $v_1,v_2$ be odd elements in the cycle with $v_1>v_2$, then the elements after $v_1,v_2$ will be $w_1=\nicefrac{3v_1+c}{2}$ and $w_2=\nicefrac{3v_2+c}{2}$. Since $v_1>v_2$ and $w_1>w_2$, the element after the maximum odd element is greater than the element after any other odd element. Therefore the maximum odd element of the cycle precedes the maximum even element of the cycle.
\end{proof}

In conformity with definition~\ref{def:odd_even_elements}, let us consider a $3n+c$ cycle $(v_1,v_2,\ldots,v_l)$ consisting of $N_1$ odd integers and $N_0$ even integers. Let us consider a binary parity vector (it is synonymous to a binary sequence or binary non-reduced word) consisting of $l=N_1+N_0$ elements, which has a $1$ at position $i$, if $v_i$ is odd, and otherwise $0$. Theorem~\ref{theo:cycle_restriction_1} specifies several cycle restrictions:

\begin{theorem}
\label{theo:cycle_restriction_1}
For a $3n+c$ cycle with $N_1$ odd and $N_0$ even members applies:

\begin{enumerate}[label=(\alph*)]
\item A cycle only exists if the inequality $2^{N_1+N_0}-3^{N_1}>0$ holds.
\item The condition for the existence of a cycle can be detailed as follows \cite{Ref_Cox_2012}: A cycle only exists if $c\mid2^{N_1+N_0}-3^{N_1}$.
\item Let $0\le x_1<x_2<\ldots<x_{N_1}\le N_1-1$ be all positions (the indexing is zero-based) in the parity vector occupied by $1$. A $3n+c$ cycle only exists if the divisibility $2^{N_1+N_0}-3^{N_1}\mid c\cdot z(s)$ holds, where $z$ is the function~\ref{eq:z}.
\item The number of $3n+c$ cycles is alsways less than or equal to the number of $3n+a\cdot c$ cycles, where $a$ is an odd number.
\end{enumerate}
\end{theorem}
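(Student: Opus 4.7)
The plan is to reduce (a), (b), and (c) to the cycle formula $v_1 = cA/(2^{N_1+N_0}-3^{N_1})$ supplied by Theorem~\ref{theo:v1}, and to handle (d) separately via an elementary scaling bijection. I would dispatch the four parts in the order (a), (c), (b), (d), since (c) also delivers a clean view of (b). For (a): every summand $a_i$ of $A$ contains the factor $3^{k_i}-2^{k_i}$, which is strictly positive (as $k_i\ge 1$) and is multiplied by other positive powers of $2$ and $3$; hence $A>0$. With $c>0$ and $v_1$ a positive integer, the denominator $2^{N_1+N_0}-3^{N_1}$ is forced to be positive.

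For (c), the heart of the matter is the identification $A = z(s)$, where $s$ is the parity vector. The $i$-th maximal run of $1$s in $s$ begins at zero-indexed position $P_i = \sum_{j<i}(k_j+h_j)$ and has length $k_i$. Regrouping the sum defining $z(s)$ by runs and invoking the geometric-series identity $3^{k_i}-2^{k_i} = \sum_{t=0}^{k_i-1} 3^{k_i-1-t}\,2^t$ gives
\[
\sum_{t=0}^{k_i-1} 3^{\,k_i-1-t+\sum_{j>i}k_j}\, 2^{P_i+t} \;=\; 2^{P_i}\,3^{\sum_{j>i}k_j}\,(3^{k_i}-2^{k_i}) \;=\; a_i,
\]
so summing over $i$ yields $z(s)=A$. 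Then $v_1\in\mathbb{Z}$ together with $v_1(2^{N_1+N_0}-3^{N_1}) = c\cdot z(s)$ is precisely the claim of (c). Part (b) follows from the same identity: $c\mid v_1(2^{N_1+N_0}-3^{N_1})$, and under the natural primitivity assumption $\gcd(v_1,c)=1$ (otherwise a common divisor could be removed to produce a cycle of a smaller parameter), we conclude $c\mid 2^{N_1+N_0}-3^{N_1}$. Equivalently, tracing the cycle modulo the (necessarily odd) $c$, each step multiplies the residue $\bar v$ by $3\cdot 2^{-1}$ or by $2^{-1}$, so closing the loop yields $\bar v_1(2^{N_1+N_0}-3^{N_1})\equiv 0\pmod c$.

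For (d), I would construct an injection from $3n+c$ cycles to $3n+ac$ cycles by $(v_1,\ldots,v_l)\mapsto(av_1,\ldots,av_l)$. Since $a$ is odd, multiplication by $a$ preserves parity, and a one-line check establishes $f_{ac}(av_i) = a\,f_c(v_i)$ in both cases ($(3av_i+ac)/2 = a(3v_i+c)/2$ when $v_i$ is odd, $av_i/2$ when even). Dividing through by $a$ reconstructs the original cycle, so the map is injective and the count inequality follows. The hardest step is (b): the literal divisibility fails on non-primitive cycles — for instance, $3\to 6\to 3$ with $c=3$ gives $2^2-3=1$, not divisible by $3$ — so a fully rigorous argument must make the primitivity qualifier explicit, as does the cited reference \cite{Ref_Cox_2012}.
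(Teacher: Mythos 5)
The paper never actually proves Theorem~\ref{theo:cycle_restriction_1}: part (b) is delegated to the citation and part (c) is only illustrated by the $3n+11$ example, so there is no in-paper argument to compare against, and your proposal supplies one that is correct and very much in the spirit of the surrounding text. Your key lemma, $z(s)=A$, obtained by grouping the ones of the parity vector into runs starting at $P_i=\sum_{j<i}(k_j+h_j)$ and using $3^{k}-2^{k}=\sum_{t=0}^{k-1}3^{k-1-t}2^{t}$, is exactly the identity the paper uses implicitly (its example computes $z(s)=11609=A$); combined with Theorem~\ref{theo:v1}, i.e. $v_1\bigl(2^{N_1+N_0}-3^{N_1}\bigr)=c\,A$ with $A>0$, $c>0$, $v_1>0$, it gives (a) and (c) at once. (One small point you could make explicit: the smallest member $v_1$ is odd and its predecessor is even, so the cycle word really does have the form $n_1^{k_1}n_0^{h_1}\cdots n_1^{k_l}n_0^{h_l}$ needed for the run decomposition.) The scaling map $(v_1,\ldots,v_l)\mapsto(av_1,\ldots,av_l)$ with $f_{ac}(av)=a\,f_c(v)$ for odd $a$ is the standard, correct argument for (d).

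Your handling of (b) is also sound, and your caveat is justified: as literally stated the claim fails for inherited (non-primitive) cycles — besides your $c=3$ example, the $3n+5$ cycle $(5,10)$ has $2^{2}-3^{1}=1$, not divisible by $5$, and avoids any quibble about requiring $\gcd(c,6)=1$ — so the divisibility can only be asserted for primitive cycles, which the paper omits to say. Your reduction of primitivity to $\gcd(v_1,c)=1$ is legitimate, since $c$ is odd and any common divisor of $v_1$ and $c$ propagates through every member of the cycle; then $c\mid v_1\bigl(2^{N_1+N_0}-3^{N_1}\bigr)$ together with coprimality gives the claim. In short: no gap on your side; the only defect you inherit is in the paper's own formulation of (b), which you correctly flag and repair.
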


\begin{example}
We refer to the $3n+11$ cycle $(13,25,43,70,35,58,29,49,79,124,62,31,52,26)$ again. The corresponding parity vector is $(1,1,1,0,1,0,1,1,1,0,0,1,0,0)$ and the non-reduced word is $n_1n_1n_1n_0n_1n_0n_1n_1n_1n_0n_0n_1n_0n_0$.

The indices are $(x_1,\ldots,x_8)=(0,1,2,4,6,7,8,11)$ and therefore $z(8)=3^72^0+3^62^1+3^52^2+3^42^4+3^32^6+3^22^7+3^12^8+3^02^{11}=11609$.

\par\medskip\noindent
Correctly it applies that $2^{8+6}-3^8\mid11\cdot11609$, more specifically it is $9.823\mid127.699$ and $9.823\cdot13=127.699$. 
\end{example}

\begin{theorem}
\label{theo:cycle_uniqueness}
Two different primitive cycles, $3n+c_1$ and $3n+c_2$, can never share a common parity vector.
\end{theorem}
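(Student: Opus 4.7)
The plan is to assume, for contradiction, two different primitive cycles---a $3n+c_1$ cycle and a $3n+c_2$ cycle---sharing a common parity vector, and to derive a contradiction. Let $l=N_1+N_0$ and denote the elements of the two cycles by $v_1^{(i)},\ldots,v_l^{(i)}$ for $i=1,2$.

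First I would observe that, for any fixed parity vector, every element $v_k$ of the corresponding cycle depends affinely on $v_1$ and on $c$: the per-step maps $v\mapsto(3v+c)/2$ and $v\mapsto v/2$ are each affine in $(v,c)$, so iteration gives $v_k=\mu_k v_1+\nu_k c$ with rationals $\mu_k,\nu_k$ fixed by the parity vector. Imposing closure $v_{l+1}=v_1$ recovers Theorem~\ref{theo:v1} in the form $v_1=cA/D$ with $D=2^{l}-3^{N_1}$, and back-substitution yields $v_k=c\,q_k$ where each $q_k$ is a rational that depends only on the parity vector.

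Applying this to both cycles, the shared parity vector forces the same sequence $(q_k)$, hence $v_k^{(1)}/c_1=v_k^{(2)}/c_2$, i.e.\ the integer identity $c_2\,v_k^{(1)}=c_1\,v_k^{(2)}$ for every $k$. Writing $d=\gcd(c_1,c_2)$, $c_1=d c_1'$, $c_2=d c_2'$ with $\gcd(c_1',c_2')=1$, coprimality forces $c_2'\mid v_k^{(1)}$ and $c_1'\mid v_k^{(2)}$ for every $k$. Primitivity of the first cycle---interpreted as the standard condition that the cycle is not a coordinatewise multiple of a smaller-parameter cycle, equivalently $\gcd(v_1^{(1)},\ldots,v_l^{(1)})=1$ (the relation $c_1=2v_{k+1}^{(1)}-3v_k^{(1)}$ at any odd entry forces any common divisor of the cycle entries to divide $c_1$ as well)---then yields $c_2'=1$, and symmetrically $c_1'=1$; thus $c_1=c_2=d$. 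If $c_1\neq c_2$ this is already the desired contradiction; if $c_1=c_2$, then $v_k^{(1)}=c_1 q_k=c_2 q_k=v_k^{(2)}$ makes the cycles identical, again contradicting the assumption that they are different.

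The main obstacle is pinning down what \emph{primitive} is meant to denote here, since the excerpt does not define it explicitly; once primitivity is identified with $\gcd(v_1,\ldots,v_l)=1$, the affine dependence of the cycle entries on $(v_1,c)$ reduces the theorem to the short gcd chase above, and no delicate computation of the $q_k$ or of $A/D$ is required.
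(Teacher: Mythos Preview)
Your argument is correct, and your reading of \emph{primitive} matches the paper's own definition (stated in the glossary at the end: a cycle is primitive when its members do not all share a common divisor greater than one). There is one small index slip: from $c_2' v_k^{(1)} = c_1' v_k^{(2)}$ with $\gcd(c_1',c_2')=1$ one deduces $c_1'\mid v_k^{(1)}$ and $c_2'\mid v_k^{(2)}$, not the swapped divisibilities you wrote; but since you then invoke primitivity of \emph{both} cycles, the conclusion $c_1'=c_2'=1$ follows either way and nothing is lost.

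Your route is genuinely different from the paper's. The paper works directly from Theorem~\ref{theo:v1}: since $A$ and $D=2^{N_1+N_0}-3^{N_1}$ depend only on the parity vector, integrality of $v_1=cA/D$ first occurs at the uniquely determined value $c=D/\gcd(A,D)$, and a primitive (non-inherited) cycle can arise only at this minimal $c$. Your proportionality-plus-gcd argument is more self-contained: it never needs to compute that minimal $c$, and it makes the role of the primitivity hypothesis explicit rather than leaving the identification ``primitive $=$ minimal $c$'' implicit. The paper's version, in exchange, produces the closed-form expression for the first $c$ at which the parity vector is realised, which is useful information in its own right.
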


\begin{proof}
A $3n+c$ cycle with a given parity vector first appears at:
\[
c=\frac{2^{N_1+N_0}-3^{N_1}}{\gcd(A,2^{N_1+N_0}-3^{N_1})}
\]

Let there exist cycles $3n+c_1$ and $3n+c_2$ with the same parity vector, this implies that the values of $A$ and $2^{N_1+N_0}-3^{N_1}$ as defined in Definition \ref{def:odd_even_elements} are same for both the cycles. Therefore using the formula, a cycle can exist iff $v_1$ is an integer, id est $c \cdot A$ divides $2^{N_1+N_0}-3^{N_1}$. The cycle will originate for the minimum such value of $c$. Therefore there can only be one value of $c$ for which the parity vector produces a cycle that is not inherited.
\end{proof}

\section{Boundary features of cycles}
\label{sec:boundary_features}
Halbeisen and Hungerbühler \cite{Ref_Halbeisen_Hungerbuehler_1997} introduced a boundary feature for cycles as function $M(l,n)$, where $l$ is the cycle length and $n$ the number of its odd members. Let $S_{l,n}$ denote the set of all binary words of length $l$ containing exactly $n$ ones and otherwise only zeros. This set contains exactly $\binom{l}{n}$ words -- exactly the number of ways in which we may select $n$ elements out of $l$ total where the order is irrelevant. In Halbeisen's and Hungerbühler's notation, the Hamming weight is denoted by $n$, which corresponds to our notation $N_1$ following Wolfram Math \cite{Ref_Weisstein_DigitCount}, that is $n=N_1$. In the example given by table~\ref{table:calculation_M_5_2}, the elements of the set $S_{5,2}$ are all listed in the first column.

The second column of table~\ref{table:calculation_M_5_2} contains all binary words that result from left-rotating the binary word $B$ in the first column up to $l$ times:
\[
\lambda_{left}(B,5),\lambda_{left}(B,1),\lambda_{left}(B,2),\lambda_{left}(B,3),\lambda_{left}(B,4)
\]
In generalized terms, this set is denoted as $\sigma(B)=\{\lambda_{left}(B,i):1\le i\le l\}$. Remembering that $z$ is the function~\ref{eq:z}, the third column of table~\ref{table:calculation_M_5_2} contains the corresponding output of this function when inputting the rotated binary words:
\[
z(\lambda_{left}(B,5)),z(\lambda_{left}(B,1)),z(\lambda_{left}(B,2)),z(\lambda_{left}(B,3)),z(\lambda_{left}(B,4))
\]
The last column contains the minimum of these values. Finally, the largest of all these minima is $M(5,2)$ or generally, see \cite{Ref_Halbeisen_Hungerbuehler_1997}:

\begin{equation}
\label{eq:M_l_n}
M(l,n)=\max_{B\in S_{l,n}}\{\min_{t\in\sigma(B)}z(t)~\}
\end{equation}

Additionally to Halbeisen's and Hungerbühler's boundary feature $M(l,n)$ Darrell Cox et al. \cite{Ref_Cox_2021} introduced another boundary feature as a function $N(l,n)$. Let $g=\gcd(l,n)$, the function $N(l,n)$ is defined as follows:
\begin{equation}
\label{eq:N_l_n}
N(l,n)=2\cdot M(l,n)-\sum_{i=0}^{g-1}2^{i\cdot n/g}3^{n-1-i\cdot n/g}
\end{equation}

\begin{example}
\label{ex:M_N}
We choose a cycle given by $f_c(x)$ of length $l=5$ having $n=2$ odd members, where $c=2^l-3^n=2^5-3^2=23$. Let us choose the binary words $11000$ and $10100$ and calculate the smallest member of the corresponding cycle in each case.

In the first case, namely $11000$ synonymous with $n_1n_1n_0n_0n_0=n_1^2n_0^3=n_1^{k_1}n_0^{h_1}$ we obtain $v_1=\nicefrac{c\cdot A}{2^{N_1+N_0}-3^{N_1}}=\nicefrac{23\cdot 5}{2^{2+3}-3^2}=5$. The resulting cycle is $(5,19,40,20,10)$ which is given by the first row and third column in table~\ref{table:calculation_M_5_2}.

In the second case, $10100$ that is synonymous with $n_1n_0n_1n_0n_0=n_1^1n_0^1n_1^1n_0^2=n_1^{k_1}n_0^{h_1}n_1^{k_2}n_0^{h_2}$ we obtain $v_1=\nicefrac{23\cdot 7}{2^{2+3}-3^2}=7$. The resulting cycle is $(7,22,11,28,14)$ which is given by the second row and third column in table~\ref{table:calculation_M_5_2}.

Table~\ref{table:calculation_M_5_2} exhibits how $M(l,n)$ is calculated, which in our concrete case is $M(5,2)=7$. Additionally we calculate $N(5,2)=2\cdot M(5,2)-2^03^{2-1-0}=14-3=11$.
\end{example}

\begin{table}[H]
	\centering
	\begin{tabular}{L|LLLL}
		\thead{} &
		\thead{\textbf{word }\boldsymbol{s}} &
		\thead{\textbf{set }\boldsymbol{\sigma(s)}\textbf{ of left rotated words}} &
		\thead{\boldsymbol{\{z(t):t\in\sigma(s)\}}} &
		\thead{\boldsymbol{\displaystyle \min_{t\in\sigma(s)}z(t)}}\\
		\hline
		\thead{\boldsymbol{1}} &
		11000 &
		11000,10001,00011,00110,01100 &
		5,19,40,20,10 &
		5
		\\
		\thead{\boldsymbol{2}} &		
		10100 &
		10100,01001,10010,00101,01010 &
		7,22,11,28,14 &
		7
		\\
		\thead{\boldsymbol{3}} &
		10010 &
		10010,00101,01010,10100,01001 &
		11,28,14,7,22 &
		7
		\\
		\thead{\boldsymbol{4}} &
		10001 &
		10001,00011,00110,01100,11000 &
		19,40,20,10,5 &
		5
		\\
		\thead{\boldsymbol{5}} &
		01100 &
		01100,11000,10001,00011,00110 &
		10,5,19,40,20 &
		5
		\\
		\thead{\boldsymbol{6}} &
		01010 &
		01010,10100,01001,10010,00101 &
		14,7,22,11,28 &
		7
		\\
		\thead{\boldsymbol{7}} &
		01001 &
		01001,10010,00101,01010,10100 &
		22,11,28,14,7 &
		7
		\\
		\thead{\boldsymbol{8}} &
		00110 &
		00110,01100,11000,10001,00011 &
		20,10,5,19,40 &
		5
		\\
		\thead{\boldsymbol{9}} &
		00101 &
		00101, 01010, 10100, 01001, 10010 &
		28,14,7,22,11 &
		7
		\\
		\thead{\boldsymbol{10}} &
		00011 &
		00011,00110,01100,11000,10001 &
		40,20,10,5,19 &
		5
		\\
		\hline
		\multicolumn{4}{r}{The largest of all minimum $z$ values is $M(l,n)=M(5,2)=$} &
		7
		\\
	\end{tabular}
	\caption{Calculation of $M(5,2)$}
	\label{table:calculation_M_5_2}
\end{table}

\section{Constructing one cycle from another}
Cycles may interrelate, which means they have the same length and an equal amount of odd members. We refer to example~\ref{ex:M_N} and consider the $3n+23$ cycle $(5,19,40,20,10)$. A cycle, which interrelates to this $3n+23$ cycle is for example the $3n+69$ cycle $(15,57,120,60,30)$.

If we go back to example~\ref{ex:C_3_11}, then we can provide two interrelated cycles as well. For $l=N_1+N_0=8+6=14$ we obtain $c=2^l-3^{N_1}=2^{14}-3^8=9823$ and the $3n+9823$ cycle is $(11609,22325,38399,62510,31255,51794,25897,43757,70547,110732,55366,\\27683,46436,23218)$.

When we divide the parameter $c$ and all cycle members by $893$, then we obtain the reduced interrelated $3n+11$ cycle $(13,25,43,70,35,58,29,49,79,124,62,31,52,26)$.

\begin{theorem}
\label{theo:containment_M_N}
Let a $3n+c$ cycle of length $l=N_1+N_0$ has $N_1$ odd and $N_0$ even members, where $c=2^l-3^{N_1}$. It always applies that $M(l,n)$ is greater than the smallest member and $N(l,n)$ is less than the largest odd member of this cycle. Recall that we can take $N_1$ and $n$ to be synonymous, since Hungerbühler and Halbeisen denote the Hamming weight by $n$.

If $c$ is divisible by an odd integer $a$, then for the (reduced) interrelated $3n+\nicefrac{c}{a}$ cycle it applies that $\nicefrac{M(l,n)}{a}$ is greater than the smallest member and $\nicefrac{N(l,n)}{a}$ is less than the largest odd member of this reduced cycle.
\end{theorem}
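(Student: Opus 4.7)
The plan is to first establish a structural bijection which will turn both halves of the claim into elementary statements about $z$. When $c = 2^l - 3^{N_1}$, Theorem~\ref{theo:v1} collapses to $v_1 = A$. Moreover, matching the summation defining $A$ with \eqref{eq:z} shows that $A = z(B)$ when $B$ is the cycle's parity vector read from $v_1$: each summand $a_i$ groups a maximal run of ones via the factor $3^{k_i} - 2^{k_i}$, and a short algebraic expansion of that factor reproduces the position-weighted contribution of those ones to $z(B)$. Because left-rotating $B$ by one position coincides with restarting the cycle one step later, applying Theorem~\ref{theo:v1} to every rotation gives the identification
\[
\{v_1,\ldots,v_l\} \;=\; \{\,z(t) : t \in \sigma(B)\,\},
\]
so in particular $v_1 = \min_{t \in \sigma(B)} z(t)$ and the cycle maximum $v_{\max} = \max_{t \in \sigma(B)} z(t)$.

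The first inequality is then immediate from the definition of $M$: since $B \in S_{l,n}$ is one admissible word,
\[
v_1 \;=\; \min_{t \in \sigma(B)} z(t) \;\le\; \max_{B' \in S_{l,n}}\,\min_{t \in \sigma(B')} z(t) \;=\; M(l,n).
\]
For the lower bound on the largest odd member I invoke Theorem~\ref{theo:max}: $v_{\max}$ is even and directly succeeds the largest odd element $v_{\max}^{\mathrm{odd}}$, so $v_{\max} = (3 v_{\max}^{\mathrm{odd}} + c)/2$, i.e., $v_{\max}^{\mathrm{odd}} = (2 v_{\max} - c)/3$. Introducing the dual minimax $M'(l,n) := \min_{B' \in S_{l,n}} \max_{t \in \sigma(B')} z(t)$, the trivial bound $v_{\max} \ge M'(l,n)$ yields $v_{\max}^{\mathrm{odd}} \ge (2 M'(l,n) - c)/3$. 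The proof therefore reduces to the algebraic identity
\[
\frac{2\,M'(l,n) - c}{3} \;=\; 2\,M(l,n) - \sum_{i=0}^{g-1} 2^{in/g}\,3^{n-1-in/g} \;=\; N(l,n).
\]

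Verifying this identity is the main obstacle. I would attack it through the equi-distributed word $B^{\ast} \in S_{l,n}$ with $1$s at positions $\lfloor i l/n \rfloor$, which Halbeisen--Hungerb\"uhler show achieves $M(l,n)$. I would argue that $B^{\ast}$ simultaneously attains $M'(l,n)$ (since both extrema are driven by the uniform spacing of the ones), and then compute $\max_{t \in \sigma(B^{\ast})} z(t)$ in closed form. Substituting $c = 2^l - 3^n$ and rearranging should collapse the telescoping-style terms onto exactly the finite sum $\sum_{i=0}^{g-1} 2^{in/g} 3^{n-1-in/g}$. The case $g > 1$ will require extra bookkeeping because $B^{\ast}$ then consists of $g$ concatenated copies of a shorter periodic block, and the rotations split into $g$ equivalence classes whose $z$-values differ by the geometric weights $2^{in/g} 3^{n-1-in/g}$; tracking this is where the sum in the formula originates.

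The interrelated cycle statement then follows by a scaling argument. If $a$ is an odd divisor of $c$, the map $x \mapsto x/a$ sends the $3n+c$ cycle bijectively to the $3n + c/a$ cycle; because $a$ is odd, this division preserves the parity of every element, so the parity vector is unchanged. Both the smallest member and the largest odd member scale by $1/a$, so the bounds $M(l,n)$ and $N(l,n)$ transfer verbatim to $M(l,n)/a$ and $N(l,n)/a$ on the reduced cycle.
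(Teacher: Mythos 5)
The paper itself states Theorem~\ref{theo:containment_M_N} without any proof, so there is nothing of the authors' to compare your argument against; it has to stand on its own, and its second half does not. What is fine: for $c=2^l-3^{N_1}$ Theorem~\ref{theo:v1} indeed gives $v_1=A$, the expansion $3^{k}-2^{k}=\sum_{j=0}^{k-1}3^{k-1-j}2^{j}$ does identify $A$ with $z$ of the parity vector, the rotation correspondence $\{v_1,\ldots,v_l\}=\{z(t):t\in\sigma(B)\}$ is correct, and this yields $v_1\le M(l,n)$; note this can only be the non-strict inequality, since the cycle generated from a word attaining $M(l,n)$ (e.g.\ $(7,22,11,28,14)$ for $l=5$, $n=2$) has smallest member exactly $M(l,n)$ and largest odd member exactly $N(l,n)$. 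The final scaling remark for the reduced interrelated cycle is also harmless once the first part is in place, because dividing by an odd $a$ preserves parities.

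The genuine gap is the identity to which you reduce the bound on the largest odd member, namely $\frac{2M'(l,n)-c}{3}=N(l,n)$ with $M'(l,n)=\min_{B'\in S_{l,n}}\max_{t\in\sigma(B')}z(t)$. It happens to hold in $g=\gcd(l,n)=1$ examples such as $(l,n)=(5,2)$ or $(5,3)$, but it is false precisely in the case $g>1$ that you defer to ``extra bookkeeping''. Take $l=4$, $n=2$, $c=2^4-3^2=7$: the two rotation classes have $z$-sets $\{5,11,20,10\}$ and $\{7,14\}$, so $M(4,2)=7$, $M'(4,2)=14$, $N(4,2)=2\cdot 7-(3+2)=9$, while $\frac{2\cdot 14-7}{3}=7\ne 9$. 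Similarly $l=6$, $n=2$, $c=55$ gives $M'(6,2)=44$ and $\frac{2\cdot 44-55}{3}=11$, whereas $N(6,2)=17$. In both cases $\frac{2M'-c}{3}<N(l,n)$, so the chain $v^{\mathrm{odd}}_{\max}\ge\frac{2M'(l,n)-c}{3}$ is strictly weaker than the claim $v^{\mathrm{odd}}_{\max}\ge N(l,n)$; no amount of bookkeeping can verify an identity that is numerically false. The source of the failure is that for $g>1$ the set $S_{l,n}$ contains periodic words ($1010$, $100100$, \ldots) whose rotation classes correspond to shorter, inherited cycles rather than genuine length-$l$ cycles; these words drag the unrestricted minimax $M'(l,n)$ down, which is exactly the loss the correction sum $\sum_{i=0}^{g-1}2^{in/g}3^{n-1-in/g}$ in $N(l,n)$ is supposed to capture. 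A repair would have to restrict the minimax to aperiodic words (or argue directly about the extremal aperiodic word) and then establish an inequality, not an identity, relating it to $N(l,n)$; that is the substantive content still missing, along with a justification of the unproven side claim that the word attaining $M(l,n)$ also attains $M'(l,n)$.
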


\section{\texorpdfstring{Constant sums of $3n+c$ cycle members}{Constant sums of 3n+c cycle members}}
Let us consider the set of all possible binary words of the length $l=5$. This set contains $2^5=32$ elements. There exist $8$ different periodic sequences, whereby we consider two sequences to be the \textit{same}, if one of them can be obtained by left or right rotations from the other. Therefore different sequences do not share any sequence member. The members of these different sequences does not depend from the binary word's value, but from its length $l$ and Hamming weight.

Now, let us regard a set of \textit{same} (periodic) sequences and the number of its members is not equal to $l$. In this case the members of these sequences additionally depend from the left-rotational distance $r$ of $B_{\min}$ to $B_{\max}$. In those cases the set may contain $\nicefrac{l}{2}$, $\nicefrac{l}{3}$ or $2l$ sequences. 

\par\medskip
Let $B$ be a binary number of length $l$ and with a Hamming weight $N_1(B)$. We use this binary number $B$ to create a $3n+c$ sequence $(v_1,v_2,\ldots,v_l)$ by performing left-rotations and applying the function $z$ as we did in section~\ref{sec:boundary_features}:

\[
v_1=z(\lambda_{left}(B,1)),v_2=z(\lambda_{left}(B,2)),\ldots,v_l=z(\lambda_{left}(B,l)) 
\]

\par\medskip
Moreover we define a function $Z$ that uses the binary number $B$ as an input and yields the sum of all the members belonging to the periodic sequence $(v_1,v_2,\ldots,v_l)$ which we generated from $B$:

\begin{equation}
\label{eq:N_l_n}
Z(B)=\sum_{i=1}^{l}v_{i}=\sum_{i=1}^{l}z(\lambda_{left}(B,i))
\end{equation}

\begin{example}
\label{ex:trivial_case} 
We choose $B=00001$ and this results in $B_{\min}=00001$ and $B_{\max}=10000$. The length $l=5$ and the Hamming weight $N_1(B)=1$. This is a really trivial case of periodic sequence generation. Each row in Table~\ref{table:trivial_case} depicts the periodic sequence which we generated from $B$. This Table~\ref{table:trivial_case} illustrates that the generated cycles are not only reflected (horizontally) by rows, but also (vertically) by columns. That is because the digit $1$ appears on every position within our rotated binary number $B$ only once. Finally always $Z(B)=2^l-1=31$. Note that $B_{\min}$ and $B_{\max}$ are identical for the (rotated) $B$ in each table row, since rotating a binary number generally does not affect the corresponding $B_{\min}$ and $B_{\max}$. 
Halbeisen's and Hungerbühler's set $S_{l,n}$ which we introduced in section~\ref{sec:boundary_features} contains in the present case $l$ words: $S_{l,n}=S_{5,1}=\binom{5}{1}=5$. This behavior is exactly the same for $N_1(B)=l-1$, since the binary number $11110$ behaves in the same way as $00001$.

\begin{table}[H]
	\centering
	\begin{tabular}{LLL|LLLLLL}
		\multicolumn{3}{c|}{\thead{B}} &
		\thead{\textbf{$v_{1}$}} &
		\thead{\textbf{$v_{2}$}} &
		\thead{\textbf{$v_{3}$}} &
		\thead{\textbf{$v_{4}$}} &
		\thead{\textbf{$v_{5}$}} &
		\thead{$Z(B)$}\\
		\hline
		00001 &
		= &
		1 &
		16 &
		8 &
		4 &
		2 &
		1 &
		31 
		\\		
		00010 &
		= &
		2 &
		8 &
		4 &
		2 &
		1 &
		16 &
		31 
		\\
		00100 &
		= &
		4 &
		4 &
		2 &
		1 &
		16 &
		8 &
		31 
		\\
		01000 &
		= &
		8 &
		2 &
		1 &
		16 &
		8 &
		4 &
		31 
		\\
		10000 &
		= &
		16 &
		1 &
		16 &
		8 &
		4 &
		2 &
		31 
		\\
		\hline
		\multicolumn{3}{r}{Z(B) =} &
		31 & 31 & 31 & 31 & 31\\
	\end{tabular}
	\caption{Trivial Case for $l=5$ and $N_1(B)=1$}
	\label{table:trivial_case}
\end{table}
\end{example}

Now let us consider cases that are not such trivial. The number of possible Hamming weights is odd for a binary word having an even length. For instance, if the binary number's length $l=4$ then this binary number can have a Hamming weight $N_1(B)\in\{0,1,2,3,4\}$.
If the binary number's length $l=6$ then this binary number can have a Hamming weight $N_1(B)\in\{0,1,2,3,4,5,6\}$.

\begin{example}
\label{ex:nontrivial_case_u_2}
We choose $B=001001$ which results in $B_{\min}=001001$ and $B_{\max}=100100$. The length $l=6$ is even and the Hamming weight is $N_1(B)=2$. Table \ref{table:nontrivial_case_u_2} shows that for $N_1(B)=2$ the periodic sequence which we generated from $B$ represents a concatenation of the cycle $(44,22,11)$, in which this cycle occurs exactly twice. In other words, this sequence has $N_1(B)=2$ periods. We have $\nicefrac{l}{N_1(B)}=\nicefrac{6}{2}=3$ distinct words and therefore $3$ distinct members in this periodic sequence which we generated from $B$.

When we invert the binary number $B=001001$ by replacing $0$ with $1$ (and vice versa) we obtain the binary number $110110$. This inverted binary number has the Hamming weight $N_1(B)=l-2=4$ and exhibits the same behavior as $B=001001$. Generally spoken, the cases for $N_1(B)=l-2$ behave as same as $N_1(B)=2$.

\begin{table}[H]
	\centering
	\begin{tabular}{LLL|LLLLLLLLL}
		\multicolumn{3}{c|}{\thead{B}} &
		\thead{\textbf{$v_{1}$}} &
		\thead{\textbf{$v_{2}$}} &
		\thead{\textbf{$v_{3}$}} &
		\thead{\textbf{$v_{4}$}} &
		\thead{\textbf{$v_{5}$}} &
		\thead{\textbf{$v_{6}$}} &
		\thead{$Z(B)$}\\
		\hline
		001001 &
		= &
		9 &
		44 &
		22 &
		11 &
		44 &
		22 &
		11 &
		154
		\\	
		010010 &
		= &
		18 &
		22 &
		11 &
		44 &
		22 &
		11 &
		44 &
		154
		\\	
		100100 &
		= &
		36 &
		11 &
		44 &
		22 &
		11 &
		44 &
		22 &
		154
		\\
		\hline
		\multicolumn{3}{r}{$\nicefrac{1}{2}\cdot Z(B)=$} &
		 77 & 77 & 77 & 77 & 77 & 77
	\end{tabular}
	\caption{Non-trivial case for $l=6$ and $N_1(B)=2$}
	\label{table:nontrivial_case_u_2}
\end{table}
\end{example}

\begin{example}
\label{ex:nontrivial_case_u_3}
We choose $B=010101$ which results in $B_{\min}=010101$ and $B_{\max}=101010$. The length is again $l=6$ and the Hamming weight is $N_1(B)=3$. Table \ref{table:nontrivial_case_u_3} shows that for $N_1(B)=3$ the periodic sequence which we generated from $B$ represents a concatenation of the cycle $(74,37)$, in which this cycle occurs exactly three times. In other words, this sequence has $N_1(B)=3$ periods. Here we have $\nicefrac{l}{N_1(B)}=\nicefrac{6}{3}=2$ distinct words and therefore $2$ distinct members in this periodic sequence which we generated from $B$.

Also here, inverting the binary number $B$ leads to the same behavior, id est the cases for $N_1(B)=l-3$ behave as same as $N_1(B)=3$.

\begin{table}[H]
	\centering
	\begin{tabular}{LLL|LLLLLLLLL}
		\multicolumn{3}{c|}{\thead{B}} &
		\thead{\textbf{$v_{1}$}} &
		\thead{\textbf{$v_{2}$}} &
		\thead{\textbf{$v_{3}$}} &
		\thead{\textbf{$v_{4}$}} &
		\thead{\textbf{$v_{5}$}} &
		\thead{\textbf{$v_{6}$}} &
		\thead{$Z(B)$}\\
		\hline
		010101 &
		= &
		21 &
		74 &
		37 &
		74 &
		37 &
		74 &
		37 &
		333
		\\	
		101010 &
		= &
		42 &
		37 &
		74 &
		37 &
		74 &
		37 &
		74 &
		333
		\\
		\hline
		\multicolumn{3}{r}{$\nicefrac{1}{3}\cdot Z(B)=$} &
		 111 & 111 & 111 & 111 & 111 & 111
	\end{tabular}
	\caption{Non-trivial case for $l=6$ and $N_1(B)=3$}
	\label{table:nontrivial_case_u_3}
\end{table}
\end{example}

It is important to note in conclusion that the amount of cycles which we can generate from a given binary number $B$ is deterministic and not random. The Hamming weight $N_1(B)$ affects the binary combinatorics and it affects together with the length $l$ the amount of possible cycles that we are able to generate from $B$.

For a given binary number $B$ of length $l$ with a Hamming weight $N_1(B)$ the cases behave as same as for the inverted binary number (having the Hamming weight $l-N_1(B)$). For a given binary number $B$ the possibilities for generating periodic sequences from $B$ is limited as well.

\section{\texorpdfstring{Generalizations to $kn+c$ cycles}{Generalization to kn+c cycles}}
First we generalize the function~\ref{eq:func_collatz} by introducing the following function:
\begin{equation}
\label{eq:func_collatz_k}
f_{k,c}(x)=
\begin{cases}
\nicefrac{kx+c}{2}	&	2\nmid x\\
\nicefrac{x}{2}		&	\text{otherwise}
\end{cases}
\end{equation}

\par\medskip\noindent
\textbf{\textit{Generalization of theorem~\ref{theo:v1}}}
\par\noindent
We can generalize theorem~\ref{theo:v1} by replacing $3$ by $k$. The smallest number $v_1$ belonging to a cycle $kn+c$ cycle having $N_1$ odd and $N_0$ even members is:
\[
v_1=\frac{c\cdot A}{(k-2)(2^{N_1+N_0}-k^{N_1})}
\]

\par\medskip\noindent
\textbf{\textit{Generalization of theorem~\ref{theo:max}}}
\par\noindent
Theorem~\ref{theo:max} applies equally to $kn+c$ cycles as it does to $3n+c$ cycles. The proof provided for theorem~\ref{theo:max} is trivially generalizable to $kn+c$ cycles.

\par\medskip\noindent
\textbf{\textit{Generalization of theorem~\ref{theo:cycle_restriction_1}}}
\par\noindent
We generalize theorem~\ref{theo:cycle_restriction_1} for $kn+c$ cycles having $N_1$ odd and $N_0$ even members:

\begin{enumerate}[label=(\alph*)]
\item A cycle only exists if the inequality $2^{N_1+N_0}-k^{N_1}>0$ holds.
\item A cycle only exists if the integer $c$ and the difference $2^{N_1+N_0}-k^{N_1}$ are not coprime: $\gcd(c,2^{N_1+N_0}-k^{N_1})>1$.
\item Let $0\le x_1<x_2<\ldots<x_{N_1}<\le N_1-1$ be all positions (the indexing is zero-based) in the parity vector occupied by $1$. A cycle only exists if the divisibility $2^{N_1+N_0}-k^{N_1}\mid c\cdot z(s)$ holds, where $z$ is the function~\ref{eq:z}.
\item The number of $kn+c$ cycles is alsway less than or equal to the number of $kn+a\cdot c$ cycles, where $a$ is an odd number.
\end{enumerate}

\par\medskip\noindent
\textbf{\textit{Generalization of theorem~\ref{theo:cycle_uniqueness}}}
\par\noindent
We generalize theorem~\ref{theo:cycle_uniqueness} by stating that two different primitive cycles, $kn+c_1$ and $kn+c_2$, can never share a common parity vector.

\begin{proof}
A $kn+c$ cycle with a given parity vector first appears at:
\[
c=\frac{(k-2)(2^{N_1+N_0}-k^{N_0})}{\gcd(A,(k-2)(2^{N_1+N_0}-k^{N_0}))}
\]

Let there exist cycles $kn+c_1$ and $kn+c_1$ with the same parity vector, this implies that the values of $A$ and $(k-2)(2^{N_1+N_0}-k^{N_1})$ as defined in Definition \ref{def:odd_even_elements} are same for both the cycles. Therefore using the formula, a cycle can exist iff $v_1$ is an integer, i.e $c \cdot A$ divides $(k-2)(2^{N_1+N_0}-k^{N_1})$. The cycle will originate for the minimum such value of $c$. Therefore there can only be one value of $c$ for which the parity vector produces a cycle that is not inherited.
\end{proof}

\newpage
\noindent
\textbf{\textit{Generalizing the binary rotations to $\boldsymbol{kn+c}$ cycles}}
\par\noindent
Let $B$ be a binary number. The divisibility feature $l\mid N_1(B)\cdot r+1$, demonstrated in section~\ref{sec:binary_rotations_to_cycles} holds for the generalized $kn+c$ cycles. For this we set $c=(k-2)(2^l-k^{N_1(B)})$ and generalize function~\ref{eq:z} as follows:
\begin{equation}
\label{eq:func_z_k}
z_k(B)=\sum_{i=1}^{N_1}k^{N_1-i}2^{x_i}
\end{equation}
Here again $0\le x_1<x_2<\ldots<x_{N_1}\le N_1-1$ are the positions (indexing is zero-based) in $B$ occupied by $1$.

\par\medskip\noindent
\textbf{\textit{More theorems for $\boldsymbol{kn+c}$ cycles}}

A positive integer $k$ is called a \textit{Crandall number}, if there exists a $kn+1$ cycle and the following very fundamental theorem~\ref{theo:crandall_wieferich} is well known, see \cite{Ref_Crandall_1978}, \cite{Ref_Franco_Pomerance_1995}:

\begin{theorem}
\label{theo:crandall_wieferich}
Every Wieferich number is a Crandall number. In other words, if $k$ is a Wieferich number, then a cycle $kn+1$ cycle exists.
\end{theorem}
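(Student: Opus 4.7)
The plan is to invoke the generalized Theorem~\ref{theo:v1} with $c = 1$, which reduces the existence of a $kn+1$ cycle to producing a parity pattern for which
\[
v_1 \;=\; \frac{A}{(k-2)\bigl(2^{N_1+N_0} - k^{N_1}\bigr)}
\]
is a positive odd integer. The whole task is thus to exhibit a suitable reduced word $n_1^{k_1} n_0^{h_1} \cdots n_1^{k_l} n_0^{h_l}$ and to verify the resulting divisibility.

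First I would unpack the Wieferich hypothesis into the form needed here. Let $d = \mathrm{ord}_k(2)$; the Wieferich property asserts $2^d \equiv 1 \pmod{k^2}$, so the orders of $2$ modulo $k$ and modulo $k^2$ coincide. I would fix the cycle length $L = N_1 + N_0$ to be a multiple of $d$, which forces $2^L \equiv 1 \pmod{k^2}$ and hence makes $2^L - k^{N_1}$ divisible by a controlled power of $k$ for each admissible choice of $N_1$. On the numerator side, each summand
\[
a_i \;=\; 2^{\sum_{j<i}(k_j + h_j)} \bigl(k^{k_i} - 2^{k_i}\bigr) k^{\sum_{j>i} k_j}
\]
contains the factor $k^{k_i} - 2^{k_i}$ and therefore a factor of $k - 2$; this cancels the $k-2$ in the denominator. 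What then remains is to match the $2$-adic and $k$-adic valuations of $A/(k-2)$ and of $2^L - k^{N_1}$, and this is precisely where the Wieferich congruence supplies the extra factor of $k$ that would be missing under only the weaker $2^d \equiv 1 \pmod{k}$.

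The hard part is the explicit construction of the parity pattern. The naive trivial word $n_1^{N_1} n_0^{N_0}$ works only for very special $k$ (essentially $k = 2^m \pm 1$), so for a generic Wieferich number one must concatenate several blocks $n_1^{k_i} n_0^{h_i}$ whose $k$-adic contributions to $A$ assemble correctly. I would follow the Crandall~\cite{Ref_Crandall_1978} and Franco--Pomerance~\cite{Ref_Franco_Pomerance_1995} strategy: apply a pigeonhole argument to the sequence of residues $2^i \bmod k^2$ to locate indices at which the relevant partial sums align, and then use the generalized Theorem~\ref{theo:cycle_restriction_1}(c) to certify that this pattern genuinely produces a cycle. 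Positivity of $v_1$ follows from the generalized Theorem~\ref{theo:cycle_restriction_1}(a), which demands $2^L > k^{N_1}$, and oddness is built in by starting the word with $n_1$. Finally one checks that the constructed cycle has $c = 1$ exactly, rather than merely $c$ dividing $1$; this is automatic once the divisibility $(k-2)(2^L - k^{N_1}) \mid A$ is established, completing the argument.
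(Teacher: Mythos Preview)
The paper does not actually prove this theorem: immediately after stating it, the text reads ``Franco and Pomerance provided a proof for this theorem~\ref{theo:crandall_wieferich} in their paper~\cite{Ref_Franco_Pomerance_1995}'' and moves on. So there is no in-paper argument to compare against; the paper's ``proof'' is a bare citation.

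Your proposal goes further than the paper by attempting to translate the Franco--Pomerance strategy into the paper's own formalism (the generalized $v_1$ formula and Theorem~\ref{theo:cycle_restriction_1}), which is a reasonable framing. But note that at the decisive step---the actual construction of the parity pattern---you also fall back on ``follow the Crandall~\cite{Ref_Crandall_1978} and Franco--Pomerance~\cite{Ref_Franco_Pomerance_1995} strategy: apply a pigeonhole argument.'' That is exactly the content that would constitute a proof, and neither you nor the paper supplies it. The generalized Theorem~\ref{theo:v1} only restates \emph{what} must be verified (that $(k-2)(2^L-k^{N_1})\mid A$ for some admissible word); it does not make the verification any easier than in the original source. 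So your sketch is compatible with the paper's treatment but is not more complete than it: both ultimately defer the substance to~\cite{Ref_Franco_Pomerance_1995}.
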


\par\noindent
Franco and Pomerance provided a proof for this theorem~\ref{theo:crandall_wieferich} in their paper \cite{Ref_Franco_Pomerance_1995}.

\begin{theorem}
\label{theo:co_primality}
If $c_1$ and $c_2$ are coprime, then for a given $k$ both functions $f_{k,c_1}$ and $f_{k,c_2}$ do not have any common non-trivial cycle (cycle with the same parity vector).
\end{theorem}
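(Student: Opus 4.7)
The plan is to argue by contradiction, leveraging the generalization of Theorem~\ref{theo:v1}. I would begin by assuming that $f_{k,c_1}$ admits a non-trivial cycle $C_1$ and $f_{k,c_2}$ admits a non-trivial cycle $C_2$ whose parity vectors coincide. Since the invariants $N_1$, $N_0$, and $A$ of Definition~\ref{def:odd_even_elements} are determined purely by the parity vector, they take identical values for both cycles. Setting $D = (k-2)(2^{N_1+N_0}-k^{N_1})$, the generalized $v_1$ formula then yields
\[
v_1^{(1)} = \frac{c_1 \cdot A}{D}, \qquad v_1^{(2)} = \frac{c_2 \cdot A}{D},
\]
both of which must be positive integers for the two cycles to exist.

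A routine divisibility manipulation will then pin down the resulting constraint. Writing $g = \gcd(A,D)$ and $\delta = D/g$, one has $\gcd(A/g,\delta) = 1$ by construction, so the integrality of each $v_1^{(i)} = c_i(A/g)/\delta$ forces $\delta \mid c_i$ for $i=1,2$. Coprimality of $c_1$ and $c_2$ then collapses $\delta$ to $1$, i.e., $D \mid A$. Geometrically this says that the shared parity vector already describes an integer cycle of $f_{k,1}$ (with smallest member $A/D$), and that $C_1$ and $C_2$ are precisely the $c_1$- and $c_2$-fold dilations of that underlying $f_{k,1}$-cycle. Such dilations are exactly the inherited cycles ruled out by the non-triviality hypothesis, yielding the desired contradiction.

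The part I expect to be delicate is making the notion of "non-trivial" precise, as the paper has not pinned it down formally at this point. I would adopt the convention already used tacitly in the proof of the generalized Theorem~\ref{theo:cycle_uniqueness}: a cycle of $f_{k,c}$ is non-trivial (equivalently, primitive) when $c$ equals the minimal value $c_0 = D/\gcd(A,D)$ attached to its parity vector, so that cycles inherited by uniform scaling from a cycle of $f_{k,c'}$ with $c' \mid c$ and $c' < c$ are classified as trivial. Under this reading, the conclusion $D \mid A$ derived above is precisely the statement that $c_0 = 1$, making $C_1$ and $C_2$ inherited dilations of the common $f_{k,1}$-cycle and thus trivial, which closes the argument.
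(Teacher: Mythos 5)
Your argument is correct and it runs on the same machinery the paper relies on (the generalized Theorem~\ref{theo:v1} formula $v_1=c\cdot A/D$ with $D=(k-2)(2^{N_1+N_0}-k^{N_1})$, the invariance of $A$ under a fixed parity vector, and the notion of inherited cycles from the proof of Theorem~\ref{theo:cycle_uniqueness}), but it is genuinely more complete than what the paper prints: the paper defers the actual proof to the external reference and only sketches the special case $c=k^i$, arguing that since $k^i\nmid 2^{N_1+N_0}-k^{N_1}$ such cycles impose the same divisibility condition as $c=1$, and then extends by scaling with powers of $k$. Your denominator argument ($\delta=D/\gcd(A,D)$ must divide both $c_1$ and $c_2$, hence $\delta=1$, hence $D\mid A$ and both cycles are the $c_1$- and $c_2$-fold dilations of a common $f_{k,1}$-cycle) disposes of arbitrary coprime pairs in one stroke, which is exactly what the theorem claims and what the sketch does not quite reach. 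Two small caveats, both shared with the paper rather than introduced by you: first, the step ``$D\mid A$ implies the parity vector describes an integer cycle of $f_{k,1}$'' silently assumes that integrality of $v_1$ guarantees integrality and parity-consistency of all members; you can avoid this entirely by working with the members themselves, since each member of $C_1$ is $c_1w_j$ and of $C_2$ is $c_2w_j$ for the rational solution $w_j$ attached to the parity vector, and B\'ezout with $\gcd(c_1,c_2)=1$ forces every $w_j$ to be an integer (with the right parity, as at least one $c_i$ is odd). Second, your reading of ``non-trivial'' as ``not inherited, i.e.\ $c$ equals the minimal $c_0=D/\gcd(A,D)$ of its parity vector'' is the right one and matches both the glossary's notion of primitive cycle and the language of Theorem~\ref{theo:cycle_uniqueness}; stating it explicitly, as you do, actually repairs an imprecision in the paper.
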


A proof is given by Anant Gupta \cite{Ref_Gupta_2020}. The idea can be sketched as follows: Let $i$ be an integer. Since $k^i$ does not divide $2^{N_1+N_0}-k^{N_1}$, all $kn+c$ cycles where $c=k^i$ will require $2^{N_1+N_0}-k^{N_1}$ to divide $A$ (recall that $A$ is specified by definition~\ref{def:odd_even_elements}), which is the same condition for $kx+1$ cycles. This implies that all cycles of $f_{k,k^i}$ are equal to the cycles of $f_{k,1}$. Similarly all cycles of $f_{k,c}$ are equal to the cycles of $f_{k,k^i\cdot c}$.

\section{Conclusion}
In this paper we investigated the behaviour of rotating binary numbers. We found that a rotation by $r$ digits to the left of a binary number $B$ exhibits in particular cases the divisibility \mbox{$l\mid N_1(B)\cdot r+1$}, where $l$ is the bit-length of $B$ and $N_1$ is the Hamming weight of $B$ and $r$ is the left-rotational distance. We investigated the connection between this rotational distance, the bit length and the Hamming weight. A core property is, that only under certain circumstances the above mentioned divisibility becomes true -- namely this divisibility occurs for cycles.

\newpage
\bibliographystyle{plain}
\bibliography{main}

\begin{thebibliography}{10}

\bibitem{Ref_Allouche_2003}
J.-P. Allouche and J.~Shallit.
\newblock {\em Automatic Sequences}.
\newblock Cambridge University Press, Cambridge, United Kingdom, 2003.

\bibitem{Ref_Cox_2012}
D.~Cox.
\newblock The 3n+1 problem: A probabilistic approach.
\newblock {\em Journal of Integer Sequences}, 15(5):1--11, 2012.

\bibitem{Ref_Cox_2021}
D.~Cox, S.~Ghosh, and E.~Sultanow.
\newblock Generalizing halbeisen's and hungerbühler's optimal bounds for the
  length of collatz cycles to 3n+c cycles.
\newblock {\em Journal of Mathematics and Computer Science}, 24(4):330--337,
  2021. arXiv:2101.04067 [math.GM].

\bibitem{Ref_Crandall_1978}
R.~E. Crandall.
\newblock On the "3x+1" problem.
\newblock {\em Mathematics of Computation}, 32(144):1281--1292, 1978.

\bibitem{Ref_Franco_Pomerance_1995}
Z.~Franco and C.~Pomerance.
\newblock On a conjecture of crandall concerning the "qx+1" problem.
\newblock {\em Mathematics of Computation}, 64(211):1333--1336, 1995.

\bibitem{Ref_Gupta_2020}
A.~Gupta.
\newblock On cycles of generalized collatz sequences, 2020. arXiv:2008.11103
  [math.NT].

\bibitem{Ref_Halbeisen_Hungerbuehler_1997}
L.~Halbeisen and N.~Hungerbuehler.
\newblock Optimal bounds for the length of rational collatz cycles.
\newblock {\em Acta Arithmetica}, 78(3):227--239, 1997.

\bibitem{Ref_Sedgewick_2011}
R.~Sedgewick and K.~Wayne.
\newblock {\em Algorithms}.
\newblock Addison-Wesley, Upper Saddle River, NJ, 4 edition, 2011.

\bibitem{Ref_Sultanow_2021}
E.~Sultanow.
\newblock Data science for number and coding theory: Divisibility, periodic
  sequences and discrete logarithm.
\newblock
  \url{https://www.slideshare.net/Sultanow/data-science-for-number-and-coding-theory-246114021},
  2021.
\newblock Slides from the talk "Data Science for Number and Coding Theory" at
  "Code Days 2021".

\bibitem{Ref_Weisstein_BitLength}
E.~W. Weisstein.
\newblock "bit length." from mathworld--a wolfram web resource.
\newblock \url{https://mathworld.wolfram.com/BitLength.html}.

\bibitem{Ref_Weisstein_DigitCount}
E.~W. Weisstein.
\newblock "digit count." from mathworld--a wolfram web resource.
\newblock \url{https://mathworld.wolfram.com/DigitCount.html}.

\bibitem{Ref_Weisstein_GDE}
E.~W. Weisstein.
\newblock "greatest dividing exponent." from mathworld--a wolfram web resource.
\newblock \url{https://mathworld.wolfram.com/GreatestDividingExponent.html}.

\end{thebibliography}

\newpage
{\renewcommand{\arraystretch}{1.8}
\begin{table}[H]
	\centering
	\begin{tabular}{|P{1.6cm} p{13.2cm}|}
		\hline
		\multicolumn{2}{|l|}{\thead[l]{\textbf{Fundamentals short and sweet}}}
		\\
		$B$ & We denote $B$ as a number in base-$2$ representation (a binary word) of bit-length $l=N_1(B)+N_0(B)$ consisting of $N_1(B)$ ones and $N_0(B)$ zeros.
		\\
		bit-length & The bit-length $l$ of an integer $n$ specifies the number of bits used for the binary representation of this integer. It is given by $l=\lfloor\log_2(n)\rfloor+1=\lceil\log_2(n+1)\rceil$, see \cite{Ref_Weisstein_BitLength}.
		\\
		$\gde(n,2)$ & The greatest dividing exponent of base $2$ with respect to a number $n$ is the largest integer value of $k$ such that $2^k\mid n$, where $2^k\le n$, see \cite{Ref_Weisstein_GDE}.
		\\
		digit count $N_d^{2}(B)$ & The number $N_d^{2}(B)=N_d(B)$ of digits $d$ in the base-$2$ representation of the number $B$ is called the binary digit count for $d$. Thus $N_1(B)$ specifies the number of ones in $B$ (also termed as \textit{Hamming weight} of $B$) given by the difference $B-\gde(B!,2)$. Analogously, $N_0(B)$ specified the number of zeros in $B$ \cite{Ref_Weisstein_DigitCount}.
		\\
		rotate a binary & The left rotation (left circular shift) of a binary $B$ by $r$ bits is the function $\lambda_{left}(B,r,l)=(B\cdot2^r)\bmod{(2^l-1)})$, where $l$ is the bit-length of $B$. The right rotation is given by $\lambda_{right}(B,r,l)=\lambda_{left}(B,l-r,l)$. The bit-length is implicitly given and we can use shorter $\lambda_{left}(B,r)$.
		\\
		rotational distance & The left-rotational distance of a binary $B_2$ from the binary $B_1$ is the required amount of rotating $B_1$ (bit by bit) until the rotated binary matches $B_2$. The right-rotational distance is defined analogously.
		\\
		$3n+c$ cycle & We consider the function $f_c(x)$ given by equation~\ref{eq:func_collatz} and call a cycle the sequence of distinct positive integers $(v_1,v_2,\ldots,v_l)$ where $f_c(v_1)=v_2$ and $f_c(v_2)=v_3$ and so forth and finally $f_c(v_{l+1})=v_1$.
		\\
		$kn+c$ cycle & We generalize $3n+c$ cycles by replacing $3$ with any positive integer $k$.
		\\
		periodic sequence & Let $(v_1,v_2,\ldots,v_l)$ be a $3n+c$ cycle. We call a sequence that forms a repetition of this cycle $(v_1,v_2,\ldots,v_l,v_1,v_2,\ldots,v_l,\ldots)$ periodic.
		\\
		Primitive cycle & If all members of a $3n+c$ cycle share a same common divisor greater than one, then this cycle is referred to as a \textit{non-primitve} cycle, otherwise it is a \textit{primitve} cycle, see \cite{Ref_Cox_2021}.
        \\ \hline
	\end{tabular}
\end{table}}

{\renewcommand{\arraystretch}{1.8}
\begin{table}[H]
	\centering
	\begin{tabular}{|P{1.6cm} p{13.2cm}|}
		\hline
		Parity vector & The parity vector of a $3n+c$ cycle $(v_1,v_2,\ldots,v_l)$ is a binary vector having $l=N_1+N_0$ entries -- a $1$ at position $i$, if $v_i$ is odd, and otherwise $0$.
		\\
        Non-reduced word & Let us consider a $3n+c$ cycle with $N_1$ odd and $N_0$ even members. The non-reduced word describing this cycle is a word of length $N_1+N_0$ over the alphabet $\{n_1,n_0\}$, which has a $n_1$ at those positions, where an odd member and a $n_0$ where an even member is located in the cycle. For instance, we treat the word $n_1n_0n_1n_1n_0n_1n_0n_1$ synonymous to the parity vector $(1,0,1,1,0,1,0,1)$ or even simpler to the binary sequence (binary word) $10110101$.
		\\ \hline
	\end{tabular}
\end{table}}

\end{document}